\providecommand{\U}[1]{\protect\rule{.1in}{.1in}}
\newtheorem{thm}{Theorem}
\newtheorem{cor}{Corollary}
\newtheorem{ex}{Example}
\newtheorem{prop}{Proposition}
\newtheorem{rem}{Remark}
\theoremstyle{definition}
\begin{document}

\title{Normal harmonic mappings\thanks{The authors were partially supported by Fondecyt Grant  \#1150284.}}
\author{Hugo Arbel\'{a}ez\thanks{The first author was partially supported by the Universidad Nacional de Colombia, Hermes code 34044.}\\  Universidad Nacional de Colombia, sede Medell\'{\i}n, Colombia
\\
 Rodrigo Hern\'{a}ndez \\ Universidad Adolfo Iba\~nez, Vi\~na del Mar, Chile\\
 \and Willy Sierra\thanks{\noindent The author wishes to thank the Universidad del
Cauca for providing time for this work through research project VRI ID 4340.}\\Universidad del Cauca, Popay\'{a}n, Colombia }
\date{}
\maketitle

\begin{abstract}
The main purpose of this paper is to study the concept of normal function in the context of harmonic mappings from the unit disk $\mathbb{D}$ to the complex plane. In particular, we obtain necessary conditions for that a function $f$ to be normal.

\medskip

\noindent\textit{Keywords}:\textbf{\ }Harmonic mappings, normal family, normal mappings,
univalent function.\medskip

\noindent2010 \textit{Mathematics Subject Classification}: 30C45, 30C55, 31A05.\ \ \ 

\end{abstract}
 
\section{Introduction}

Let $f$ be a meromorphic function defined in the unit disk $\mathbb {D}$. It is known that $f$ is normal if the family $\mathcal{F}=\left\lbrace f\circ \sigma: \sigma \in \text {Aut}(\mathbb{D})  \right\rbrace $ is a normal family in the sense of Montel, where Aut($\mathbb{D}$) denotes the group of conformal automorphisms of $\mathbb{D}.$ This definition is due to Lehto and Virtanen \cite{LV}; however the concept of normal function was first studied by Yosida \cite{Yo}  and subsequently
by Noshiro \cite{N38} in a more general context. An important result in \cite{N38}, which allows to characterize the normal meromorphic functions, establishes that $f$ is normal if and only if
\begin{equation} \label{DI1}
\sup_{z\in \mathbb{D}}(1-|z|^2)f^{\#} (z)<\infty,
\end{equation}
where
\[f^{\#} (z)=\frac{|f'(z)|}{1+|f(z)|^2}
\]
is the spherical derivative of $f.$ The condition (\ref{DI1}) is equivalent to saying that $f$ is Lipschitz when regarded as a function from the hyperbolic disk $\mathbb{D}$ into the extended complex plane endowed with the chordal distance (see \cite{LV}). The univalent meromorphic functions and analytic functions which omit two values are some important examples of normal meromorphic functions.

Mainly because of its importance in geometric functions theory, specifically in the study of the behaviour in the boundary of meromorphic functions in the unit disk, many authors have  investigated properties of normal meromorphic functions, both, from the geometric point of view and from the analytic point of view; see for example \cite{ACP}, \cite{Po75}, and \cite{Y}. In particular, Ch. Pommerenke \cite{Po75} proved that a meromorphic function in $\mathbb{D},$ which satisfies the condition
\begin{equation} \label{DI1Po} 
\iint_\mathbb{D} {\left\lbrace f^{\#}(z)\right\rbrace }^2dA< \infty,
\end{equation}
is normal. 

In \cite{L} Lappan also considers real valued harmonic functions defined in $\mathbb{D}$, and he established that $u$ is normal if  
\begin{equation}\label{DI2}
\sup_{z\in \mathbb{D}}(1-|z|^2)\frac{|\text{grad} \, u(z)|}{1+u^2(z)}<\infty,
\end{equation} 
where \text{grad}$u$ is the gradient vector of $u$. In that paper, he shows that if $u$ is a harmonic normal function, and if $f=u+iv$ is analytic in $\mathbb{D}$, then $f$ is a normal function. In \cite{AL} the authors also prove geometric properties of real valued harmonic normal functions, as for example, a harmonic version of the Pommerenke's result cited above. Namely, a real valued harmonic function $u$ with the property
\[\iint_\mathbb{D} {\left\lbrace \frac{|\text{grad} \, u(z)|}{1+u^2(z)} \right\rbrace }^2dA< \infty,
\]
is normal.

Since the topic of harmonic mappings of complex value is one of the most studied in the area of complex analysis nowadays (see, for example, \cite{Harmonic S-1}, \cite{Bloch type}, \cite{John Ponnusamy}, \cite{SM90}); it seems natural to address the issue of normal harmonic mappings (of complex value) defined in $\mathbb{D}$. We remark that an important subject related with normal meromorphic functions is the concept of Bloch function, which has been studied by Colonna \cite{Co89} in the setting of harmonic mappings. See also \cite{Bloch type}. Along the paper we will consider harmonic mappings from $\mathbb{D}$ into $\mathbb{C},$ which have a canonical representation of the form $f=h+\overline{g},$ where $h$ and $g$ are analytic functions in $\mathbb{D}$; this representation is unique with the condition $g(0)=0.$ It is a classical result of Lewy \cite{Le} that a harmonic mapping is locally univalent in a domain $\Omega$ if and only if its Jacobian does not vanish. In terms of the canonical decomposition the Jacobian is given by $J_f=|h'|^2-|g'|^2$, and thus, a locally univalent harmonic mapping in a simply connected domain $D$ will be sense-preserving if $|h'|>|g'|$. The family of all sense-preserving univalent harmonic mappings defined in $\mathbb{D}$, normalized by $h(0) = 0,$ $g(0) = 0,$ and $h'(0) = 1,$ will be denoted by $S_H.$ Also, we denote by $S^0_H$ the subclass of functions in $S_H$ that satisfy the further normalization $g'(0) = 0$.

Following the above ideas, particularly the definition given by Colonna \cite{Co89} of Bloch harmonic function, we will say that a harmonic mapping $f$ defined in $\mathbb{D}$ is normal if it satisfies a Lipschitz type condition. In Section 2, analogously to (\ref{DI1}) and (\ref{DI2}), we prove that $f=h+\overline{g}$ is normal if and only if
\begin{equation} \label{DI3}
\sup_{z\in\mathbb{D}}\left(  1-\left\vert
z\right\vert ^{2}\right)  \frac{|h'(z)|+|g'(z)|}{1+\left\vert f\left(
	z\right)  \right\vert ^{2}}<\infty.
\end{equation}
We observe that if $f$ is analytic, equation (\ref{DI3}) reduces to equation (\ref{DI1}). In addition, we show some examples of functions which turn out to be normal and obtain several properties that normal harmonic mappings satisfy. In Section 3, we establish the main results of this work; in particular, we show that in the case of complex valued harmonic mappings we require an additional condition for a univalent function to be normal which cannot be omitted.

\section{Definition and properties}

We recall that the chordal distance on the extended complex plane $\widehat{\mathbb{C}}$ is defined by
\begin{equation*}
\chi(z,w)=\frac{|z-w|}{\sqrt{1+|z|^{2}}\sqrt{1+|w|^{2}}}; \qquad z,w\in\mathbb{C},
\end{equation*} 
and $\chi(z,\infty)=\left( 1+|z|^{2}\right)^{-1/2}$. If $P_z$ and $P_w$ are the points on the Riemann sphere, under stereographic projection, corresponding to $z$ and $w$ respectively, we have $\chi(z,w)=|P_z-P_w|.$ Therefore, $\chi(z,w)\leq\sigma(z,w)\leq L(\Gamma),$ where $\sigma(z,w)$ is the spherical distance of $z$ to $w,$ $\Gamma$ is any rectifiable curve in $\mathbb{C}$ with endpoints $z,w,$ and
\[L(\Gamma)=\int_{\Gamma}\frac{|d\xi|}{1+|\xi|^2}\]
is the spherical length of $\Gamma.$ Throughout the paper, given $z,w\in\mathbb{D},$ $\rho\left(  z,w\right)$ denotes the hyperbolic distance between $z$ and $w.$ So, if $\gamma$ denotes the hyperbolic geodesic joining $z$ to $w,$ then
\[\rho(z,w)=\int_{\gamma}\frac{|d\zeta|}{1-|\zeta|^2}.\,\]
More explicitly
\[\rho(z,w)=\frac{1}{2}\log\frac{1+r}{1-r}\,,\]
where
\[r=\left| \frac{z-w}{1-\bar{z}w}\right| .\]
With this notation, a harmonic mapping $f:\mathbb{D}\to\mathbb{C}$ is called a normal harmonic mapping, if 
\begin{equation*}
\sup_{z\neq w}\frac{\chi\left(  f\left(  z\right)  ,f\left(  w\right)
	\right)  }{\rho\left(  z,w\right)  }<\infty.
\end{equation*}
The following proposition provides an alternative method for determining when a harmonic mapping is normal. 
\begin{prop}\label{B}
	Let $f=h+\overline{g}$ be a harmonic mapping in $\mathbb{D}$. Then, $f$ is normal if and only if 
	\begin{equation} \label{C}
	\left\Vert f\right\Vert _{n}:=\sup_{z\in\mathbb{D}}\left(  1-\left\vert
	z\right\vert ^{2}\right)  \frac{|h'(z)|+|g'(z)|}{1+\left\vert f\left(
		z\right)  \right\vert ^{2}}<\infty.%
	\end{equation}
\end{prop}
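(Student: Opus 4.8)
The plan is to prove both implications by relating the chordal distance $\chi(f(z),f(w))$ to an integral of the "harmonic spherical derivative"
\[
\Lambda_f(z):=\frac{|h'(z)|+|g'(z)|}{1+|f(z)|^2}
\]
along the hyperbolic geodesic joining $z$ to $w$. The starting point is the elementary estimate $\chi(f(z),f(w))\le\sigma(f(z),f(w))\le L(\Gamma)$ already recorded in the excerpt, applied to the curve $\Gamma=f\circ\gamma$, where $\gamma$ is the hyperbolic geodesic from $z$ to $w$. Since $f$ is harmonic with $df=h'\,d\zeta+\overline{g'\,d\zeta}$, along $\gamma$ one has $|d(f\circ\gamma)|\le(|h'|+|g'|)\,|d\zeta|$, and hence
\[
\chi(f(z),f(w))\le\int_{\gamma}\frac{|h'(\zeta)|+|g'(\zeta)|}{1+|f(\zeta)|^2}\,|d\zeta|
=\int_{\gamma}\Lambda_f(\zeta)\,|d\zeta|.
\]

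For the direction \eqref{C}$\Rightarrow$ normal: if $\|f\|_n=M<\infty$, then $\Lambda_f(\zeta)\le M/(1-|\zeta|^2)$, so the last integral is bounded by $M\int_{\gamma}\frac{|d\zeta|}{1-|\zeta|^2}=M\rho(z,w)$. Dividing by $\rho(z,w)$ and taking the supremum over $z\ne w$ gives normality with constant at most $M$.

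For the converse, suppose $f$ is normal with constant $C=\sup_{z\ne w}\chi(f(z),f(w))/\rho(z,w)$. Fix $z_0\in\mathbb{D}$; I want to bound $(1-|z_0|^2)\Lambda_f(z_0)$. The standard device is to precompose with the disk automorphism $\varphi(z)=\frac{z_0+z}{1+\overline{z_0}z}$ and work at the origin: set $F=f\circ\varphi=H+\overline{G}$ with $H=h\circ\varphi$, $G=g\circ\varphi$, so that $H'(0)=h'(z_0)\varphi'(0)=h'(z_0)(1-|z_0|^2)$ and similarly $G'(0)=g'(z_0)(1-|z_0|^2)$ (up to a unimodular factor from $\overline{\varphi'(0)}$, which is harmless since we only care about moduli — here I should be slightly careful, because $G(0)=g(\varphi(0))=g(z_0)$ need not vanish, but subtracting the constant does not change $G'$). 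Since $\rho$ is automorphism-invariant, $F$ is normal with the same constant $C$. It therefore suffices to show: for any harmonic $F=H+\overline{G}$ that is normal with constant $C$, one has $|H'(0)|+|G'(0)|\le$ (const)$\cdot C$, i.e. the quantity in \eqref{C} at the origin is controlled.

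To get this pointwise bound at $0$, I would use that for small $|w|$, $\rho(0,w)=|w|+O(|w|^3)$ while $1+|F(w)|^2=1+|F(0)|^2+O(|w|)$, and $F(w)-F(0)=H'(0)w+\overline{G'(0)w}+o(|w|)$. Hence
\[
\chi(F(w),F(0))=\frac{|H'(0)w+\overline{G'(0)w}|+o(|w|)}{(1+|F(0)|^2)\sqrt{1+|F(w)|^2}}.
\]
The normality condition forces this to be $\le C\rho(0,w)=C|w|+o(|w|)$, so
\[
|H'(0)w+\overline{G'(0)w}|\le C\,(1+|F(0)|^2)\,|w|+o(|w|)
\]
for all small $w$. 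Choosing the argument of $w$ to maximize $|H'(0)w+\overline{G'(0)w}|$ — the maximum over $|w|=r$ of $|aw+\overline{bw}|$ is $(|a|+|b|)r$ — and then letting $r\to0$ yields $|H'(0)|+|G'(0)|\le C(1+|F(0)|^2)$. Translating back through $\varphi$ gives exactly $(1-|z_0|^2)\Lambda_f(z_0)\le C$, and taking the supremum over $z_0$ gives $\|f\|_n\le C<\infty$.

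I expect the main obstacle to be the converse direction, and specifically making the infinitesimal argument at the origin fully rigorous: one must justify the Taylor expansions of $H$, $G$ and of $1+|F|^2$ uniformly enough to extract the clean inequality $|H'(0)|+|G'(0)|\le C(1+|F(0)|^2)$ after optimizing over $\arg w$, and one must handle the nonvanishing of $G(0)$ under the automorphism correctly (it affects $|F(0)|$ but not the derivatives). The forward direction is essentially the length–distance estimate and should be short. A cosmetic point worth double-checking is the exact relation $|\varphi'(0)|=1-|z_0|^2$ and that the unimodular factors do not disturb the sum $|H'(0)|+|G'(0)|$.
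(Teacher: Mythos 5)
Your proposal is correct and follows essentially the same route as the paper: the forward implication is the identical length--distance estimate along the hyperbolic geodesic, and for the converse the paper simply cites Colonna (p.~831), whose argument is precisely the difference-quotient computation at the origin (after precomposing with an automorphism and optimizing over $\arg w$, using $\max_{|w|=r}|aw+\overline{bw}|=(|a|+|b|)r$) that you carry out. Your version just supplies the details the paper delegates to the reference, and the infinitesimal steps you flag as needing care do go through since $H$, $G$ are analytic and $\rho(0,w)=|w|+O(|w|^{3})$.
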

\begin{proof}
	Suppose that $\left\Vert f\right\Vert_{n}<\infty$ and let $z,w \in\mathbb{D}.$ Thus, if $\gamma:\left[  0,1\right]  \rightarrow\mathbb{D}$ is the hyperbolic geodesic with endpoints $z$ and $w,$
	\[\chi\left(  f\left(  z\right)  ,f\left(  w\right)  \right) \leq\int_{f\circ\gamma}\frac{\left\vert d\xi\right\vert }{1+\left\vert\xi\right\vert ^{2}}=\int_{0}^{1}\frac{\left\vert df\left(  \gamma(t)\right)\gamma^{\prime}(t)\right\vert}{1+\left\vert f\left(  \gamma(t)\right)\right\vert ^{2}}\,dt,\]
	where $df$ stands for the differential of $f$. From here and \eqref{C}, we have
	\[\chi\left(  f\left(  z\right)  ,f\left(  w\right)  \right)\leq \left\Vert f\right\Vert _{n}\int_{0}^{1}\frac{\left\vert \gamma
		^{\prime}\left(  t\right)  \right\vert dt}{1-\left\vert \gamma\left(
		t\right)  \right\vert ^{2}}=\left\Vert f\right\Vert _{n}\rho\left(
	z,w\right).\]
	The proof that $f$ normal implies condition (\ref{C}) follows as in	\cite{Co89}, p. 831.
\end{proof}
\begin{rem}\label{rem 1}
	The proof really shows that
	\[\sup_{z\neq w}\frac{\chi\left(  f\left(  z\right)  ,f\left(  w\right)
		\right)  }{\rho\left(  z,w\right)  }=\sup_{z\in\mathbb{D}}\left(  1-\left\vert
	z\right\vert ^{2}\right)  \frac{|h'(z)|+|g'(z)|}{1+\left\vert f\left(
		z\right)  \right\vert ^{2}}.\]
	We note further that if $\varphi:\mathbb{D}\to\mathbb{D}$ is analytic and locally univalent it follows, by Schwarz-Pick's Lemma, that $\left\Vert f\circ\varphi \right\Vert_{n}\leq\left\Vert f\right\Vert_{n},$ with equality if $\varphi\in	\mathrm{Aut}\left(\mathbb{D}\right).$ Therefore, $f$ normal implies that $f\circ\varphi$ must also be normal. Also, a straightforward calculation shows that $\left\| f\right\| _{n}=\left\| 1/f\right\| _{n}.$
\end{rem}
Let  $\mathcal{F}$ be a family of sense-preserving harmonic mappings $f=h+\overline{g}$ in $\mathbb{D}$. The family $\mathcal{F}$ is said to be affine and linearly invariant if it closed under the two operations of Koebe transform and affine changes:
\[K_f(z)=\frac{f\left(\frac{z+\xi}{1+\overline{\xi} z} \right)-f(\xi)}{(1-|\xi|^2)h'(\xi)}, \,\,\,\,\,\,\,\,\, |\xi|<1,
\]
and
\[A_f(z)=\frac{f(z)+\epsilon \overline{f(z)}}{1+\epsilon g'(0)}, \,\,\,\,\,\,\,\,\, |\epsilon|<1.
\]
In \cite{SM90}, Sheil-Small offers an in-depth study of affne and linearly invariant families $\mathcal{F}$ of harmonic mappings in $\mathbb{D}$.
\begin{cor}
	The class of all normal harmonic mappings is an affine and linearly invariant family.
\end{cor}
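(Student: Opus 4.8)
The plan is to derive everything from the characterization \eqref{C} of Proposition~\ref{B} and from Remark~\ref{rem 1}. By the definition of an affine and linearly invariant family it suffices to show that the class of normal harmonic mappings is closed under the Koebe transform $f\mapsto K_f$ and under the affine change $f\mapsto A_f$. I would first observe that each of these operations factors as a pre-composition of $f$ with a disk automorphism (which preserves normality by the Schwarz--Pick argument recorded in Remark~\ref{rem 1}) followed by a post-composition of the values with an $\mathbb{R}$-affine map of $\mathbb{C}$ of the form $w\mapsto aw+b\overline w+c$. Thus the whole statement reduces to the following stability lemma, which I would prove first: \emph{if $F=h+\overline g$ is normal and $a,b,c\in\mathbb{C}$ with $|a|>|b|$, then $G:=aF+b\overline F+c$ is normal; more precisely $\|G\|_n\le(|a|+|b|)\,M\,\|F\|_n$, where $M:=\sup_{w\in\mathbb{C}}\frac{1+|w|^2}{1+|aw+b\overline w+c|^2}$.}

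The lemma itself I would establish in two short steps. First, $M<\infty$, because the quotient defining $M$ is continuous on $\mathbb{C}$ and, since $|aw+b\overline w+c|\ge(|a|-|b|)|w|-|c|\to\infty$ as $|w|\to\infty$, this quotient is bounded. Second, writing $\overline F=\overline h+g$ one computes the canonical decomposition $G=H_1+\overline{G_1}$ with $H_1=ah+bg+c$ and $G_1=\overline a\,g+\overline b\,h$, so that $|H_1'|+|G_1'|\le(|a|+|b|)(|h'|+|g'|)$; feeding in the rearranged form $(1-|z|^2)(|h'(z)|+|g'(z)|)\le\|F\|_n\,(1+|F(z)|^2)$ of \eqref{C} and dividing by $1+|G(z)|^2$, the surviving factor $(1+|F(z)|^2)/(1+|G(z)|^2)$ is at most $M$ at every point, and taking the supremum over $z\in\mathbb{D}$ gives the stated inequality. (The case $|a|<|b|$, if one wants it, reduces to the above by passing to $\overline G$, since conjugation leaves $\|\cdot\|_n$ invariant.)

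It then remains to recognize $K_f$ and $A_f$ in the form treated by the lemma. For the Koebe transform, with $\sigma(z)=(z+\xi)/(1+\overline\xi z)\in\mathrm{Aut}(\mathbb{D})$ one has $K_f=a\,(f\circ\sigma)+c$ with $a=\left[(1-|\xi|^2)h'(\xi)\right]^{-1}\ne0$ (here $h'(\xi)\ne0$ since $f$ is sense-preserving) and $c=-a\,f(\xi)$, so the lemma applies with $b=0$ and $F=f\circ\sigma$, which is normal by Remark~\ref{rem 1}. For the affine change, $A_f=a f+b\,\overline f$ with $a=(1+\epsilon g'(0))^{-1}\ne0$ and $b=\epsilon a$; since $|\epsilon|<1$ we get $|b|=|\epsilon|\,|a|<|a|$, so the lemma applies with $F=f$ and $c=0$. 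This proves that both $K_f$ and $A_f$ are normal, hence the class is affine and linearly invariant. I expect the only genuine difficulty to lie in the post-composition step: an $\mathbb{R}$-affine map $w\mapsto aw+b\overline w+c$ distorts the chordal metric in a bounded way exactly when $|a|\ne|b|$ — if $|a|=|b|$ the map collapses a line through $\infty$ and $M$ is infinite — and it is precisely the sense-preserving hypotheses ($h'(\xi)\ne0$ and $|\epsilon|<1$) that deliver this strict inequality in the two cases at hand.
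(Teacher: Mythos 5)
Your proposal is correct and follows essentially the same route as the paper: both reduce the statement to showing that post-composing a normal $F$ with an $\mathbb{R}$-affine map $w\mapsto aw+b\overline w+c$ ($|a|\neq|b|$) preserves normality, by noting the numerator of $\left\Vert\cdot\right\Vert_n$ grows by at most $|a|+|b|$ while the ratio $(1+|F|^2)/(1+|aF+b\overline F+c|^2)$ stays bounded, with the pre-composition handled by Remark~\ref{rem 1}. If anything you are slightly more careful than the paper, which omits the translation constant $c=-af(\xi)$ needed for the Koebe transform and writes as an equality a step that is really the inequality $|af_z+b\overline{f_{\bar z}}|+|af_{\bar z}+b\overline{f_z}|\le(|a|+|b|)(|f_z|+|f_{\bar z}|)$ combined with $1+|af+b\overline f|^2\ge 1+\bigl||a|-|b|\bigr|^2|f|^2$.
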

\begin{proof}
	It is sufficient to prove that $A \circ f $ is normal for all affine harmonic mapping $A(z)=az+b\bar{z}$ with $|a|\neq|b|.$ This follows immediately from the equalities
	\[(A \circ f)_{z}=af_{z}+b\overline{f_{\overline z}}\qquad\text{and}\qquad (A \circ f)_{\overline {z}}=af_{\overline {z}}+b\overline{f_{z}}\,,\]
	which imply
	\begin{align*}
	(1-|z|^{2})\frac{|(A \circ f)_{z}|+|(A \circ f)_{\overline {z}}|}{1+|af+b\overline{f}|^{2}}
	&=(1-|z|^{2})\frac{|af_{z}+b\overline{f_{\overline z}}|+|af_{\overline {z}}+b\overline{f_{z}}|}{1+|af+b\overline{f}|^{2}}\\
	& = (1-|z|^{2})\frac{(|a|+|b|)(|f_{z}|+|f_{\overline z}|)}{1+\left| |a|-|b|\right|^{2}|f|^{2} }\\ 
	& \leq \left\| f\right\| _{n} \frac{(|a|+|b|)(1+|f|^{2})}{1+\left| |a|-|b|\right|^{2}|f|^{2}}.
	\end{align*}
	Hence $\left\|A\circ f\right\| _{n}<\infty.$
\end{proof}
Next, we shall give two examples to illustrate the above proposition. The examples, which are functions in $S^0_H,$ show that, unlike the meromorphic case, a univalent harmonic mapping is not necessarily normal.

In this example and subsequently $a \lesssim b$ means that $a\leq Mb$, for some constant $M>0$ independent on $a$ and $b$.

\begin{ex}\label{ex 1}
	We consider the function $L(z)=Re\left\lbrace \dfrac{z}{1-z} \right\rbrace +iIm \left\lbrace  \dfrac{z}{(1-z)^2}\right\rbrace,$ which we can write in the form $L=h+\overline{g},$ where
	\[h(z)=\frac{1}{2}\dfrac{z(2-z)}{(1-z)^{2}}\qquad\text{and}\qquad g(z) = -\frac{1}{2}\dfrac{z^{2}}{(1-z)^{2}}.\]
	We observe that
	\[h'(z)=\dfrac{1}{(1-z)^{3}}\qquad\text{ and }\qquad |g'|<|h'|,\]
	the last inequality being a consequence of the fact that $L$ is a sense-preserving harmonic mapping. It follows that
	\begin{align*}
	\frac{(1-|z|^{2})|h'(z)|}{1+|L(z)|^{2}}&= \frac{(1-|z|^{2})\left| \frac{1}{1-z}\right| ^{3}}{1+\frac{1}{4}\left| \dfrac{z(2-z)}{(1-z)^{2}}-\overline{\left( \frac{z}{1-z}\right) ^{2}}\right|^{2} }\\[2mm]
	&=\frac{(1-|z|^{2})\left| \frac{1}{1-z}\right| ^{3}}{1+\frac{1}{4}\left|-1+ \left( \dfrac{1}{1-z}\right) ^{2}-\overline{\left( \frac{z}{1-z}\right) ^{2}}\right|^{2} }\,,
	\end{align*}
	whence by defining $u=\dfrac{1+z}{1-z}=x+ iy,$ one sees that $x>0$ and
	\begin{align*}
	\frac{(1-|z|^{2})|h'(z)|}{1+|L(z)|^{2}}&=\frac{1}{2} \frac{|u+1|Re\left\lbrace  u\right\rbrace }{1+\frac{1}{4}|-1+iRe\left\lbrace  u\right\rbrace Im\left\lbrace  u\right\rbrace+Re\left\lbrace  u\right\rbrace| ^{2}}\\
	& \lesssim \frac{x+x^{2}+x|y|}{1 + \frac{1}{4} \left[ (x-1)^{2}+(xy)^{2}\right]}\\
	&:=J(x,y).
	\end{align*}
	We consider two cases. First, if $x|y|\leq 1,$ then
	\[J(x,y)\lesssim \dfrac{x^{2}+x+1}{1+\frac {1}{4} (x-1)^{2}}<K_{1},\]
	for all $x>0.$ In the other case, if $x|y|> 1,$
	\[J(x,y)\lesssim \dfrac{x(x+1)}{1+\frac {1}{4} (x-1)^{2}}+\frac{x|y|}{\frac{1}{4}(x|y|)^{2}}<K_{2}.\]
	Thus, from
	\[\left(  1-\left\vert
	z\right\vert ^{2}\right)  \frac{|h'(z)|+|g'(z)|}{1+\left\vert L\left(
		z\right)  \right\vert ^{2}}\leq \left(  1-\left\vert
	z\right\vert ^{2}\right)  \frac{2\left|h'(z) \right|  }{1+\left\vert L\left(
		z\right)  \right\vert ^{2}},\]
we conclude that $L$ is normal.
\end{ex}
\begin{ex}\label{ex 2}
	The harmonic map $f$ defined by 
	\begin{equation}\label{E}
	f(z)=\frac{z}{1-z}-\overline{\frac{z}{1-z}-\log (1-z),} 
	\end{equation}
	for all $z\in \mathbb{D}$, is not normal. Indeed, it is easy to check that
	\begin{equation*}
	\left\| f\right\| _{n}\geq \lim_{r\to 1^{-}} \frac{(1+r)^{2}}{(1-r)(1+\log^{2}(1-r))}=\infty.
	\end{equation*}
\end{ex}
\begin{rem}
	The harmonic mapping $f=h+\overline{g}$ defined by (\ref{E}) shows that $h$ and $g$ being normal does not imply that $f$ is normal. That example shows that the study of normal harmonic mappings is not reduced to meromorphic case.	
\end{rem}
\begin{rem}
	The function defined in (\ref{E}) also allows us to justify that in general the uniform limit of a sequence of normal harmonic mappings is not a normal harmonic mapping. In effect, if
	\begin{equation*}
	f_{n}(z)=\frac{z}{1-z}-\left( 1-\frac{1}{n}\right) \overline{ \left(  \frac{z}{1-z}+\log (1-z)\right)} ,
	\end{equation*}	
	then $f_{n}$ is normal for all $n$ and converges uniformly to $f$.
\end{rem}
The following result gives us a tool to verify that a given function is not normal. In addition, it gives an idea of the behaviour of a harmonic normal function near the boundary of $\mathbb{D}.$
\begin{prop}
	Let $f$ be a harmonic mapping in $\mathbb{D}$ and let $\left\{  z_{n}\right\}$,	$\left\{  z_{n}^{\prime}\right\}$ be sequences in $\mathbb{D}$ such that $\lim\limits_{n\rightarrow\infty}\rho\left(  z_{n},z_{n}^{\prime}\right)  =0$ and $\lim\limits_{n\rightarrow\infty}\left\vert z_{n}\right\vert =1.$ If
	\[
	\lim_{n\rightarrow\infty}f\left(  z_{n}\right)  =\alpha\text{ \ \ and \ \ }%
	\lim_{n\rightarrow\infty}f\left(  z_{n}^{\prime}\right)  =\beta;\qquad\left(
	\alpha\neq\beta\right)  ,
	\]
	then $f$ is not a normal function.
\end{prop}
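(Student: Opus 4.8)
The plan is to argue by contradiction, using the characterization of normality already available. Suppose $f$ were normal. By Proposition~\ref{B} we have $\|f\|_n<\infty$, and by Remark~\ref{rem 1} the quantity
\[
N:=\sup_{z\neq w}\frac{\chi\left(f(z),f(w)\right)}{\rho(z,w)}
\]
equals $\|f\|_n$, hence is finite. The first step is simply to feed the given sequences into this bound: for every $n$,
\[
\chi\left(f(z_n),f(z_n')\right)\le N\,\rho(z_n,z_n').
\]
Since $\rho(z_n,z_n')\to 0$ by hypothesis, the right-hand side tends to $0$, so $\chi\left(f(z_n),f(z_n')\right)\to 0$.

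The second step is to compute the same limit a different way. The chordal distance is a genuine metric on $\widehat{\mathbb{C}}$ inducing its usual topology, so from $f(z_n)\to\alpha$ and $f(z_n')\to\beta$ together with the triangle inequality,
\[
\left|\chi\left(f(z_n),f(z_n')\right)-\chi(\alpha,\beta)\right|\le\chi\left(f(z_n),\alpha\right)+\chi\left(f(z_n'),\beta\right)\longrightarrow 0,
\]
whence $\chi\left(f(z_n),f(z_n')\right)\to\chi(\alpha,\beta)$. Because $\alpha\neq\beta$ we have $\chi(\alpha,\beta)>0$, which contradicts the conclusion of the previous paragraph. Therefore $f$ cannot be normal.

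I do not expect any serious obstacle here; the argument is short once Remark~\ref{rem 1} is in hand. The only point requiring a little care is that $\alpha$ or $\beta$ may equal $\infty$, so one must work throughout with the chordal metric on $\widehat{\mathbb{C}}$ (with $\chi(z,\infty)=(1+|z|^2)^{-1/2}$, as defined in Section~2) rather than the Euclidean distance; with that convention the continuity/triangle-inequality step above is unaffected. It is also worth noting which hypotheses do the work: only $\rho(z_n,z_n')\to 0$ and $\alpha\neq\beta$ enter the contradiction. The assumption $|z_n|\to 1$ is not strictly needed for the proof, but it is essentially unavoidable in the statement, since if $\{z_n\}$ had a subsequence converging to an interior point $z_0$, then $\rho(z_n,z_n')\to 0$ would force $z_n,z_n'\to z_0$, and continuity of $f$ would give $\alpha=\beta$, making the hypotheses vacuous.
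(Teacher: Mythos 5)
Your argument is correct and is essentially the paper's own proof: both argue by contradiction from the finiteness of $\sup_{z\neq w}\chi(f(z),f(w))/\rho(z,w)$, deduce $\chi(f(z_n),f(z_n'))\to 0$, and derive a contradiction with $\chi(\alpha,\beta)>0$ via the triangle inequality for the chordal metric. The paper merely spells out the case $\alpha=\infty$ separately, which your remark about working throughout in $(\widehat{\mathbb{C}},\chi)$ already covers.
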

\begin{proof}
	Arguing by contradiction, let us assume that
	\[k:=\sup\limits_{z\neq w}\frac{\chi\left(  f\left(  z\right)  ,f\left(  w\right)
		\right)  }{\rho\left(  z,w\right)  }<\infty.\]
	Thus, by hypothesis
	\[
	\chi\left(  f\left(  z_{n}\right)  ,f\left(  z_{n}^{\prime}\right)  \right)
	\leq k\rho \left(  z_{n},z_{n}^{\prime}\right)  \longrightarrow0,\text{\qquad
	}n\longrightarrow\infty.
	\]
	Two cases may occur:
	$\left( i\right)$ If $\ \alpha,\beta\in\mathbb{C},$ for all $n,$
	\[
	0<\chi\left(  \alpha,\beta\right)  \leq\chi\left(  \alpha,f\left(
	z_{n}\right)  \right)  +\chi\left(  f\left(  z_{n}\right)  ,f\left(
	z_{n}^{\prime}\right)  \right)  +\chi\left(  f\left(  z_{n}^{\prime}\right)
	,\beta\right).
	\]	
	Letting $n\longrightarrow\infty$ we obtain $\chi\left(  \alpha,\beta\right)=0.$
	
	$\left(  ii\right)$ Similar argument apply to the case where $\alpha=\infty.$ Indeed, for all $n,$
	\begin{align*}
	0 <\chi\left(  \infty,\beta\right) &\leq\chi\left(  \infty,f\left(z_{n}\right)  \right)  +\chi\left(  f\left(  z_{n}\right),f\left(
	z_{n}^{\prime}\right)  \right)  +\chi\left(  f\left(  z_{n}^{\prime}\right)
	,\beta\right)\\
	& =\frac{1}{\sqrt{1+\left\vert f\left(  z_{n}\right)  \right\vert^{2}}}+\chi\left(  f\left(  z_{n}\right)  ,f\left(  z_{n}^{\prime}\right)
	\right)  +\chi\left(  f\left(  z_{n}^{\prime}\right)  ,\beta\right).
	\end{align*}
	By taking limits as $n\longrightarrow\infty$ we have $\chi\left(  \alpha,\beta\right)=0.$ In both cases a contradiction is obtained.
\end{proof}

\section{Main results }

In the meromorphic case, a function $f$ is normal in $\mathbb{D}$ if and only if the family $\left\{  f\circ\varphi:\varphi\in\mathrm{Aut}\left(
\mathbb{D}\right)  \right\}  $ is normal in $\mathbb{D}$ in the sense of Montel. The following is a partial harmonic analogue. 
\begin{thm}
	Let $f=h+\overline{g}$ be a normal harmonic mapping in $\mathbb{D}$, then $\mathcal{F}:= \left\{f\circ\varphi:\varphi\in\mathrm{Aut}\left(\mathbb{D}\right)\right\}$ is a normal family in $\mathbb{D}.$ 
\end{thm}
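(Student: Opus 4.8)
The plan is to deduce normality of $\mathcal F$ from a uniform Lipschitz bound (with respect to the hyperbolic metric on the domain and the chordal metric on the range) together with an Arzelà--Ascoli type argument on the sphere. First I would observe that, by Proposition \ref{B} and Remark \ref{rem 1}, normality of $f$ gives $\|f\circ\varphi\|_n=\|f\|_n<\infty$ for every $\varphi\in\mathrm{Aut}(\mathbb D)$, and moreover
\[
\chi\big(f(\varphi(z)),f(\varphi(w))\big)\le \|f\|_n\,\rho(z,w)\qquad\text{for all }z,w\in\mathbb D,\ \varphi\in\mathrm{Aut}(\mathbb D).
\]
Thus the family $\mathcal F$, viewed as maps from $(\mathbb D,\rho)$ into $(\widehat{\mathbb C},\chi)$, is uniformly Lipschitz, hence equicontinuous with respect to the ordinary Euclidean topology on $\mathbb D$ on every compact subset (since on a compact $K\subset\mathbb D$ the metric $\rho$ is comparable to the Euclidean metric). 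Because $(\widehat{\mathbb C},\chi)$ is a compact metric space, the values $\{F(z):F\in\mathcal F\}$ are trivially precontained in a compact set for each $z$.

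Next I would invoke the version of the Arzelà--Ascoli theorem for continuous maps into a compact metric space: an equicontinuous family of continuous functions from a domain $\mathbb D$ into the compact space $(\widehat{\mathbb C},\chi)$ is normal, i.e. every sequence has a subsequence converging uniformly on compact subsets of $\mathbb D$ with respect to $\chi$. Applying this to $\mathcal F$ yields that every sequence $\{f\circ\varphi_n\}$ has a subsequence converging locally uniformly, in the chordal metric, to some continuous limit $F:\mathbb D\to\widehat{\mathbb C}$. This is exactly the meaning of $\mathcal F$ being a normal family in the sense of Montel (chordal/spherical normality), which is the natural notion here since harmonic mappings, like meromorphic functions, may approach $\infty$.

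The main subtlety — and the step I would be most careful about — is the passage from the hyperbolic-Lipschitz estimate to genuine equicontinuity in the Euclidean topology on compacta: this is where the factor $(1-|z|^2)$ does its work. On a closed hyperbolic disk $\{|z|\le r\}$ one has $\rho(z,w)\le C_r|z-w|$ for a constant $C_r$ depending only on $r$, so the chordal modulus of continuity of each $F\in\mathcal F$ on $\{|z|\le r\}$ is controlled uniformly by $\|f\|_n C_r$; letting $r\uparrow 1$ through an exhaustion gives equicontinuity on every compact subset of $\mathbb D$. A secondary point worth stating explicitly is \emph{why} this is only a ``partial'' analogue: the converse fails, because normality of the family $\mathcal F$ does not force the limit functions to be harmonic, and more to the point Example \ref{ex 2} exhibits a univalent — hence, one might hope, well-behaved — harmonic mapping that is not normal, so the equivalence available in the meromorphic setting genuinely breaks down. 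I would close the proof once the Arzelà--Ascoli conclusion is in hand, noting that no conformality or harmonicity of the limit is claimed.
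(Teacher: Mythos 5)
Your proposal is correct and follows essentially the same route as the paper: both arguments use the invariance $\|f\circ\varphi\|_n=\|f\|_n$ from Remark~\ref{rem 1} to get a uniform chordal Lipschitz bound, convert it into spherical equicontinuity on compact subsets (you via the comparison $\rho(z,w)\le C_r|z-w|$ on $\{|z|\le r\}$, the paper by integrating over Euclidean segments in a small disk about a fixed point --- the same estimate in different packaging), and then conclude by the Arzel\`a--Ascoli theorem for maps into the compact space $(\widehat{\mathbb{C}},\chi)$. No gaps.
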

\begin{proof}
	Given $\varphi\in\mathrm{Aut}\left(\mathbb{D}\right),$ let $f_{\varphi}:=f\circ\varphi\in\mathcal{F}.$ By Remark\,\ref{rem 1}, $\left\|f_{\varphi} \right\|_n=\left\|f \right\|_n,$ and so there is $M>0$ such that
	\[
	\left(  1-\left\vert z\right\vert ^{2}\right)  \frac{\left\vert \left(
		h\circ\varphi\right)  ^{\prime}\left(  z\right)  \right\vert +\left\vert
		\left(  g\circ\varphi\right)  ^{\prime}\left(  z\right)  \right\vert
	}{1+\left\vert f_{\varphi}\left(  z\right)  \right\vert ^{2}}\leq M,
	\]
	for all $z\in\mathbb{D}.$ Let us fix $z_{1}\in\mathbb{D}$ and let $r=\dfrac
	{1-\left\vert z_{1}\right\vert }{2}$. For all $z\in\overline{\Delta\left(
		z_{1},r\right)  },$ we consider $\gamma\left(  t\right)  =z_1+t\left(
	z-z_{1}\right)  ,$ $t\in\left[  0,1\right]  .$ Thus, if $\Gamma=f_{\varphi
	}\circ\gamma$ we have
	\[
	\chi\left(  f_{\varphi}\left(  z\right)  ,f_{\varphi}\left(  z_{1}\right)
	\right)  \leq \int_{\Gamma} \frac{\left\vert dz\right\vert }{1+\left\vert z\right\vert ^{2}}=%
	\int_{0}^{1} \frac{\left\vert df_{\varphi}\left(  \gamma\left(  t\right)  \right)
		\right\vert \left\vert z-z_{1}\right\vert dt}{1+\left\vert f_{\varphi}\left(
		\gamma\left(  t\right)  \right)  \right\vert ^{2}}\leq\widehat{M}\left\vert
	z-z_{1}\right\vert,
	\]
	where $\widehat{M}$ depends on $M$ and $z_{1}.$ Then, $\mathcal{F}$ is spherically equicontinuous on compact sets in $\mathbb{D}.$ It follows, from the Arzel\'a-Ascoli Theorem, that 
	$\mathcal{F}$ is normal.
\end{proof}
\begin{rem}
	The converse of the previous theorem is not true. Indeed, let us consider the family $\mathcal{F=}\left\{  f\circ\varphi:\varphi\in\mathrm{Aut}\left(
	\mathbb{D}\right)  \right\} $ with $f$ as in (\ref{E}). This family is normal in $\mathbb{D}$.
\end{rem}
Now it is interesting to know which subclasses of harmonic functions turn out to be normal. 
From Theorem 3 in \cite{Co89} it follows that if $f$ is a bounded harmonic mapping in $\mathbb{D}$, then $f$ is normal. Moreover, if $\left\vert f\left(  z\right)  \right\vert \leq k,$ for all $z\in\mathbb{D}%
$, then $\left\Vert f\right\Vert _{n}\leq\dfrac{4k}{\pi}.$ The function
$f\left(  z\right)  =\dfrac{2k}{\pi}\mathrm{Arg}\left(  \dfrac{1+z}{1-z}\right)
$ proves that the inequality is sharp. Also, it follows from
\[
\left(  1-\left\vert z\right\vert ^{2}\right)  \frac{\left\vert h^{\prime
	}\left(  z\right)  \right\vert }{1+\left\vert h\left(  z\right)  \right\vert
	^{2}}\leq\left(  1-\left\vert z\right\vert
^{2}\right)  \frac{\left\vert h^{\prime
	}\left(  z\right) \right\vert + \left\vert g^{\prime
	}\left(  z\right)  \right\vert  }{1+\left\vert f\left(  z\right)
	\right\vert ^{2}}\left(  1+\left\vert f\left(  z\right)  \right\vert
^{2}\right),
\]
that if $f=h+\overline{g}$ is a bounded harmonic mapping in $\mathbb{D},$ then $h$ is normal. In a completely analogous way, we also deduce that $g$ is normal.\\

In the following proposition and in the sequel, $\left\Vert \omega\right\Vert _{\infty}:=\sup\limits_{z\in\mathbb{D}}\left\vert \omega_{f}\left(  z\right)  \right\vert$, where $\omega_{f}=g^{\prime}/h^{\prime}$ is the second complex dilatation of $f$.
\begin{prop}
	Let $f=h+\overline{g}$ be a sense-preserving harmonic mapping in $\mathbb{D}$ with $\left\| \omega\right\|_{\infty}<1$. If $h$ is a starlike function, then $f$ is normal.
\end{prop}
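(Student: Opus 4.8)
The strategy is to verify the criterion of Proposition \ref{B} directly, i.e.\ to show $\|f\|_n<\infty$. Put $k=\|\omega\|_\infty<1$. Since $|g'(z)|=|\omega_f(z)|\,|h'(z)|\le k\,|h'(z)|$, we have $|h'(z)|+|g'(z)|\le 2|h'(z)|$, so it suffices to bound $(1-|z|^2)\,|h'(z)|/(1+|f(z)|^2)$. On $|z|\le 1/2$ this quantity is at most $|h'(z)|\le\max_{|w|\le 1/2}|h'(w)|<\infty$, so only the range $1/2\le|z|<1$ needs attention.

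The heart of the matter is the inequality $|g(z)|\le k\,|h(z)|$ for all $z\in\mathbb{D}$, and this is exactly where starlikeness enters; recall from Example \ref{ex 2} that control of the dilatation alone does not force normality. To prove it I would work on the image side. Since $h$ is starlike it is univalent with $h(0)=0$ and $\Omega:=h(\mathbb{D})$ is starlike with respect to the origin. The function $G:=g\circ h^{-1}$ is analytic on $\Omega$, satisfies $G(0)=g(0)=0$, and $G'(w)=g'(z)/h'(z)=\omega_f(z)$ when $w=h(z)$, so $|G'|\le k$ throughout $\Omega$. For any $w\in\Omega$ the segment $\{tw:0\le t\le 1\}$ lies in $\Omega$ by starlikeness, hence
\begin{equation*}
|g(z)|=|G(w)|=\left|\int_0^1 G'(tw)\,w\,dt\right|\le k\,|w|=k\,|h(z)|.
\end{equation*}

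To finish, first note $|f(z)|\ge|h(z)|-|g(z)|\ge(1-k)|h(z)|$, so that $1+|f(z)|^2\ge 1+(1-k)^2|h(z)|^2\ge 2(1-k)|h(z)|$ by the elementary inequality $1+a^2\ge 2a$. Secondly, starlikeness of $h$ means $\operatorname{Re}\bigl(zh'(z)/h(z)\bigr)>0$, a function taking the value $1$ at the origin, so the classical Carath\'{e}odory bound for analytic functions with positive real part gives $|z\,h'(z)|\le\frac{1+|z|}{1-|z|}|h(z)|$; consequently $(1-|z|^2)|h'(z)|\le\frac{(1+|z|)^2}{|z|}|h(z)|\lesssim|h(z)|$ on $1/2\le|z|<1$. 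Dividing the two estimates (here $h(z)\ne0$ since $z\ne0$) yields
\begin{equation*}
\left(1-|z|^2\right)\frac{|h'(z)|}{1+|f(z)|^2}\lesssim\frac{|h(z)|}{(1-k)|h(z)|}=\frac{1}{1-k}\qquad\bigl(\tfrac12\le|z|<1\bigr),
\end{equation*}
and together with the bound on $|z|\le 1/2$ this shows $\|f\|_n<\infty$, so $f$ is normal by Proposition \ref{B}. I expect the one genuinely substantive step to be the passage to the image giving $|g|\le k|h|$; everything else is a routine combination of the Carath\'{e}odory estimate and the inequality $1+a^2\ge2a$.
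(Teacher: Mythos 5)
Your proof is correct. The decisive step --- the inequality $|g(z)|\le\|\omega\|_{\infty}\,|h(z)|$ obtained by exploiting that the radial segment from $0$ to $h(z)$ lies in the starlike image $h(\mathbb{D})$ --- is exactly the paper's key lemma; you phrase it by pushing forward to $G=g\circ h^{-1}$ and integrating $G'$ along the segment, while the paper pulls the segment back to $\Gamma=h^{-1}(\gamma)$ and compares $\int_{\Gamma}|h'|\,|d\zeta|$ with $\int_{\Gamma}|g'|\,|d\zeta|$; these are the same computation. Where you genuinely diverge is in how you finish. The paper invokes only the general fact that a univalent analytic function is normal, so that $(1-|z|^2)|h'(z)|/(1+|h(z)|^2)\le M$, and then controls the quotient $(1+|h|^2)/(1+|f|^2)\le 1+|h|^2/|f|^2\le 1+(1-\|\omega\|_{\infty})^{-2}$ using $|f|\ge(1-\|\omega\|_{\infty})|h|$. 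You instead use starlikeness a second time, via the Carath\'eodory bound $|zh'(z)/h(z)|\le(1+|z|)/(1-|z|)$, to get the explicit growth estimate $(1-|z|^2)|h'(z)|\lesssim|h(z)|$ on $1/2\le|z|<1$, and pair it with the elementary lower bound $1+|f|^2\ge 2(1-k)|h|$. Your route is more self-contained and yields an explicit constant, but it leans on starlikeness twice; the paper's second half is more modular --- it shows that the conclusion really only needs ``$h$ normal'' plus the ratio bound $|g|/|h|\le\|\omega\|_{\infty}$, with starlikeness entering solely to secure the latter. Both arguments are sound (and both, as you do explicitly by treating $|z|\le 1/2$ separately, need the harmless caveat that $h$ vanishes only at the origin).
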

\begin{proof}
	Let $z\in\mathbb{D}$ and $\Gamma=h^{-1}(\gamma)$, where $\gamma$ is the segment from $0$ to $h(z)$. Thus
	\begin{equation*}
	\left| h(z)\right| =\int_{\gamma}\left| dw\right|= \int_{\Gamma}\left| h'(\zeta)\right|\left| d\zeta\right|\geq \frac{1}{ \left\| \omega\right\|_{\infty} }\int_{\Gamma}\left| g'(\zeta)\right|\left| d\zeta\right|\geq \frac{1}{ \left\| \omega\right\|_{\infty} }\left| g(z)\right|.
	\end{equation*}
	It follows that $\dfrac{\left| g(z)\right|}{\left| h(z)\right|}\leq \left\| \omega\right\|_{\infty}$ for all $z\in\mathbb{D}$. On the other hand, since $h$ is normal, there is $M>0$ such that
	\begin{align*}
	\left(  1-\left\vert z\right\vert^{2}\right)  \frac{\left\vert h^{\prime}\left(  z\right) \right\vert + \left\vert g^{\prime}\left(  z\right)  \right\vert  }{1+\left\vert f\left(  z\right)\right\vert ^{2}}
	&\leq \left(  1-\left\vert z\right\vert ^{2}\right)  \frac{2\left\vert h^{\prime}\left(  z\right)  \right\vert }{1+\left\vert h\left(  z\right)  \right\vert^{2}}\frac{1+\left\vert h\left(  z\right)  \right\vert	^{2}}{1+\left\vert f\left(  z\right)  \right\vert^{2}}\\
	&\leq M\frac{1+\left\vert h\left(  z\right)  \right\vert^{2}}{1+\left\vert f\left(  z\right)  \right\vert^{2}}\\
	&\leq M\left( 1+\frac{\left\vert h\left(  z\right)  \right\vert^{2}}{\left\vert f\left(  z\right)  \right\vert^{2}}\right) 
	\end{align*}
	for all $z\in\mathbb{D}.$ Hence,
	\begin{align*}
	\left(  1-\left\vert z\right\vert^{2}\right)  \frac{\left\vert h^{\prime}\left(  z\right) \right\vert + \left\vert g^{\prime}\left(  z\right)  \right\vert  }{1+\left\vert f\left(  z\right)\right\vert ^{2}}
	&\leq M\left( 1+\left( 1-\frac{\left| g(z)\right|}{\left| h(z)\right|}\right) ^{-2}\right)\\
	&\leq M\left( 1+\left( 1-\left\| \omega\right\|_{\infty}\right) ^{-2}\right)
	\end{align*}
	for all $z\in\mathbb{D},$ which completes the proof.
\end{proof}
It is known that all univalent meromorphic functions are normal, however the function considered in Example\,\ref{ex 2} shows that this is not true in the harmonic case. In the following theorem we prove that a univalent harmonic function $f$ is normal if satisfies the additional condition $\left\| \omega\right\|_{\infty}<1.$
\begin{thm}
	Let $f=h+\overline{g}$ be a univalent harmonic mapping in $\mathbb{D}$ with $\left\|\omega\right\|_{\infty}<1$. Then $f$ is normal. The condition  $\left\Vert\omega\right\Vert _{\infty}<1$ can in general not be omitted.
\end{thm}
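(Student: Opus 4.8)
The plan is to handle the two assertions in turn. For the positive part, by Proposition~\ref{B} it suffices to prove that $\|f\|_n<\infty$. Put $k:=\|\omega\|_\infty<1$; then $|g'|\le k|h'|$ on $\mathbb D$, so $J_f=|h'|^2-|g'|^2>0$ and, by Lewy's theorem, $f$ is a sense-preserving $K$-quasiconformal homeomorphism of $\mathbb D$ onto $\Omega:=f(\mathbb D)$, with $K=(1+k)/(1-k)$. Since there is no quasiconformal homeomorphism of $\mathbb D$ onto $\mathbb C$, the domain $\Omega$ is a proper subdomain of $\mathbb C$; choosing $w_0\in\partial\Omega$ and using $2|w|\le 1+|w|^2$ one gets
\[
C_\Omega:=\sup_{w\in\Omega}\frac{\operatorname{dist}(w,\partial\Omega)}{1+|w|^2}
\le\sup_{w\in\mathbb C}\frac{|w-w_0|}{1+|w|^2}\le\tfrac12+|w_0|<\infty .
\]

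The crux is the following quasiconformal Koebe-type estimate: there is $C(k)<\infty$ with
\[
\bigl(1-|z|^2\bigr)\bigl(|h'(z)|+|g'(z)|\bigr)\le C(k)\,\operatorname{dist}\!\bigl(f(z),\partial\Omega\bigr),\qquad z\in\mathbb D .
\]
To see this, fix $z\in\mathbb D$ and apply the Koebe transform: with $\varphi(\zeta)=(\zeta+z)/(1+\bar z\zeta)$ set $F:=f\circ\varphi=H+\overline G$. Then $F$ is univalent and $K$-quasiconformal, $F(\mathbb D)=\Omega$, $F(0)=f(z)$, and $|H'(0)|+|G'(0)|=(1-|z|^2)(|h'(z)|+|g'(z)|)$. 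A Cauchy estimate on $\{|\zeta|\le\tfrac12\}$ together with $|G'(0)|\le k|H'(0)|$ gives $|H'(0)|+|G'(0)|\le 2(1+k)\,\operatorname{diam}F(\{|\zeta|\le\tfrac12\})$, while the distortion theorem for $K$-quasiconformal mappings yields $\operatorname{diam}F(\{|\zeta|\le\tfrac12\})\le C(K)\,\operatorname{dist}(F(0),\partial\Omega)$. I expect this last, quasiconformal distortion step to be the main obstacle; it is precisely where the hypothesis $k<1$ is genuinely used, and it follows from the standard ring-modulus estimates (the modulus of the annulus $\{\tfrac12<|\zeta|<1\}$ is changed by at most the factor $K$ under $F$, and a Teichmüller-type bound then forces the continuum $F(\{|\zeta|\le\tfrac12\})$ to have diameter comparable to its distance from $\partial\Omega$).

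Granting the estimate,
\[
\|f\|_n=\sup_{z\in\mathbb D}\bigl(1-|z|^2\bigr)\frac{|h'(z)|+|g'(z)|}{1+|f(z)|^2}
\le C(k)\sup_{w\in\Omega}\frac{\operatorname{dist}(w,\partial\Omega)}{1+|w|^2}=C(k)\,C_\Omega<\infty ,
\]
so $f$ is normal. Finally, for the sharpness assertion I would invoke Example~\ref{ex 2}: the harmonic mapping $f$ defined in~\eqref{E} is univalent (indeed it belongs to $S^0_H$) and its dilatation satisfies $|\omega_f|<1$ on $\mathbb D$ but $\|\omega_f\|_\infty=1$, yet it was shown there that $f$ is not normal. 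Thus the conclusion fails as soon as the strict inequality $\|\omega\|_\infty<1$ is relaxed, so this hypothesis cannot in general be omitted.
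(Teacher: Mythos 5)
Your overall strategy for the positive part --- reduce, via the Koebe transform, to the pointwise inequality $(1-|z|^2)(|h'(z)|+|g'(z)|)\le C(k)\operatorname{dist}(f(z),\partial\Omega)$ and then observe that $\operatorname{dist}(w,\partial\Omega)/(1+|w|^2)$ is bounded because $\Omega\neq\mathbb{C}$ --- is sound and genuinely different from the paper's. The paper instead writes $f=aF+b$ with $F\in S_H$ and uses Sheil-Small's growth theorem for $S_H^0$, applied to the sheared Koebe transform $k_0=(k-\overline{b_1}\,\overline{k})/(1-|b_1|^2)$, to obtain the two-point estimate $(1-|z|^2)|H'(z)|\le \frac{C}{1-\|\omega\|_\infty}\,|F(z)|$, which it then combines with the elementary bound $\sup_{w}|aw|/(1+|aw+b|^2)<\infty$. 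So the paper needs neither $\Omega\neq\mathbb{C}$ nor any quasiconformal machinery; the hypothesis $\|\omega\|_\infty<1$ enters only through the factor $1-|b_1|$ produced by the shearing. Your treatment of the sharpness assertion coincides with the paper's (the map of Example~\ref{ex 2}).

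As written, however, your argument has two genuine gaps, both located exactly where you place the ``crux''. First, the Cauchy estimate controls $|H'(0)|$ by $\operatorname{diam}H(\{|\zeta|\le \tfrac12\})$, not by $\operatorname{diam}F(\{|\zeta|\le \tfrac12\})$, and the passage between the two is not addressed: the inequality $|G'|\le k|H'|$ compares \emph{lengths} of image curves, not diameters of image sets, so $\operatorname{diam}H\lesssim\operatorname{diam}F$ does not follow from it. A correct substitute uses univalence: since $|H'|^2\le J_F/(1-k^2)$ and $|H'|^2$ is subharmonic, one gets $|H'(0)|^2\le\frac{4}{\pi(1-k^2)}\int_{|\zeta|<1/2}J_F\,dA=\frac{4}{\pi(1-k^2)}\,\mathrm{Area}\,F(\{|\zeta|<\tfrac12\})\le\frac{1}{1-k^2}\bigl(\operatorname{diam}F(\{|\zeta|\le\tfrac12\})\bigr)^2$. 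Second, the distortion inequality $\operatorname{diam}F(\{|\zeta|\le\tfrac12\})\le C(K)\operatorname{dist}(F(0),\partial\Omega)$ is true (the separating ring $\Omega\setminus F(\overline{D(0,1/2)})$ has modulus at least $\frac{1}{K}\,\frac{\log 2}{2\pi}$, and a Teichm\"uller-ring estimate then bounds the ratio of diameter to distance), but you only gesture at it and yourself call it ``the main obstacle''; it carries essentially all the content of the theorem and would have to be proved or precisely cited. Until these two steps are supplied the proposal is a programme rather than a proof; once they are, it is a legitimate alternative to the paper's argument, at the cost of importing quasiconformal modulus estimates where the paper only needs Theorem~1 of \cite{SM90}.
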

\begin{proof}
	Let $F\in S_{H}$ such that $f=aF+b=aH+b+\overline{\overline{a}G},$ $a,b\in\mathbb{C}$ constants, $a\neq 0$. Thus, for all $z\in\mathbb{D},$
	\begin{equation}\label{G}
	\begin{split}
	\frac{\left(  1-\left\vert z\right\vert ^{2}\right)  \left(  \left\vert
		h^{\prime}\left(  z\right)  \right\vert +\left\vert g^{\prime}\left(
		z\right)  \right\vert \right)  }{1+\left\vert f\left(  z\right)  \right\vert
		^{2}} &  =\frac{\left(  1-\left\vert z\right\vert ^{2}\right)  \left(
		\left\vert aH^{\prime}\left(  z\right)  \right\vert +\left\vert \overline
		{a}G^{\prime}\left(  z\right)  \right\vert \right)  }{1+\left\vert aF\left(
		z\right)  +b\right\vert ^{2}}\\
	&  \leq\frac{\left(  1-\left\vert z\right\vert ^{2}\right)  \left\vert
		aF\left(  z\right)  \right\vert }{1+\left\vert aF\left(  z\right)
		+b\right\vert ^{2}}\,\frac{2\left\vert H^{\prime}\left(  z\right)  \right\vert
	}{\left\vert F\left(  z\right)  \right\vert }.
	\end{split}
	\end{equation}
	On the other hand, given $z\in\mathbb{D}$ fixed, we consider the Koebe transform 
	\[
	k\left(  \zeta\right)  =\frac{F\left(  \frac{z+\zeta}{1+\overline{z}\zeta
		}\right)  -F\left(  z\right)  }{\left(  1-\left\vert z\right\vert ^{2}\right)
		H^{\prime}\left(  z\right)  }.
	\]
	Notice that
	\[
	\omega_{F}=\frac{a}{\overline{a}}\,\omega_{f}\text{ \ \ \ \ \ and \ \ \ \ \ }%
	\omega_{k}=\omega_{F}\left(  \frac{z+\zeta}{1+\overline{z}\zeta}\right)
	=\frac{a}{\overline{a}}\,\omega_{f}\left(  \frac{z+\zeta}{1+\overline{z}\zeta
	}\right)  .
	\]
	Also, $k\in S_{H},$
	\[b_{1}=b_{1}\left(  k\right)  =w_{k}\left(0\right) =\frac{a}{\overline{a}}\,\omega_{f}\left(z\right)\quad\text{and}\quad k_{0}=\dfrac{k-\overline{b_{1}}\overline{k}}{1-\left\vert b_{1}\right\vert
		^{2}}\in S_{H}^{0}.\]
	Thus, by Theorem\,1 in \cite{SM90}, we have
	\[
	\left\vert k_{0}\left(  \zeta\right)  \right\vert \geq\frac{1}{2\alpha}\left[
	1-\left(  \frac{1-\left\vert \zeta\right\vert }{1+\left\vert \zeta\right\vert
	}\right)  ^{\alpha}\right],
	\]
	where $\alpha$ is the supremum of $|a_2|$ among all functions in $S_H$. With $\zeta=-z,$ we obtain
	\[
	\left\vert \frac{k\left(  -z\right)  -\overline{b_{1}}\overline{k\left(
			-z\right)  }}{1-\left\vert b_{1}\right\vert ^{2}}\right\vert \geq\frac
	{1}{2\alpha}\left[  1-\left(  \frac{1-\left\vert z\right\vert }{1+\left\vert
		z\right\vert }\right)  ^{\alpha}\right]  ,
	\]
	whence
	\[
	\frac{\left\vert -F\left(  z\right)  +\overline{b_{1}}\;\overline{F\left(
			z\right)  }\right\vert }{\left(  1-\left\vert b_{1}\right\vert ^{2}\right)
		\left(  1-\left\vert z\right\vert ^{2}\right)  \left\vert H^{\prime}\left(
		z\right)  \right\vert }\geq\frac{1}{2\alpha}\left[  1-\left(  \frac
	{1-\left\vert z\right\vert }{1+\left\vert z\right\vert }\right)  ^{\alpha
	}\right]  .
	\]
	From this, there is $C>0$ such that
	\[
	\frac{\left(  1-\left\vert z\right\vert ^{2}\right)  \left\vert H^{\prime
		}\left(  z\right)  \right\vert }{\left\vert F\left(  z\right)  \right\vert
	}\,\frac{\left(  1-\left\vert b_{1}\right\vert ^{2}\right)  }{1+\left\vert
		b_{1}\right\vert }\leq\frac{2\alpha}{1-\left(  \frac{1-\left\vert z\right\vert
		}{1+\left\vert z\right\vert }\right)  ^{\alpha}}\leq C,
	\]
	for every $z$ with $1/2\leq\left\vert z\right\vert <1.$ Therefore, for those $z$ we have
	\begin{equation}\label{H}
	\frac{\left(  1-\left\vert z\right\vert ^{2}\right)  \left\vert H^{\prime
		}\left(  z\right)  \right\vert }{\left\vert F\left(  z\right)  \right\vert
	}\leq\frac{C}{1-\left\vert b_{1}\right\vert }=\frac{C}{1-\left\vert \omega
		_{f}\left(  z\right)  \right\vert }\leq\frac{C}{1-\left\Vert \omega\right\Vert
		_{\infty}}.
	\end{equation}
It follows from (\ref{G}) and (\ref{H}) that $f$ is normal.
	
That the condition  $\left\Vert \omega \right\Vert _{\infty}<1$ cannot be omitted is justified by the function $f$ defined in (\ref{E}). 
\end{proof}
In our last theorem we will show an integral condition which implies that $f$ is normal. This result arises looking for an analogous to Pommerenke's integral criterion given by condition (\ref{DI1Po}). Namely, a meromorphic function is normal if satisfies the condition
\begin{equation*}  
\iint_\mathbb{D} {\left\lbrace f^{\#}(z)\right\rbrace }^2dA< \infty.
\end{equation*}
Given a nonnegative subharmonic function $u$ defined in $\mathbb{D}$ let us define 
\[
N(r,u)=\int_{0}^{r} \frac {\mu (B(0,t))}{t}dt,
\]
where $\mu$ is the Riesz measure of $u$. Namely, if $u \in C^{2}(\mathbb{D})$ then $d\mu=\Delta u\,dx,$ where $dx$ is the Lebesgue measure (see \cite{HK76}).

For all $0<r<1,$ $u\in C^{2}(\overline{D(0,r)}).$ Thus, $u$ restricted to the disk $D(0,1/2)$ can be extended to a subharmonic function in all $\mathbb{R} ^{2}$ (see \cite{VMP01}, Theorem 5.1). Then, by Theorem 1.2 in \cite{KZ03}, there are constants $\delta >0$ and $c>0$ such that 
\begin{equation}\label{aux subharmonic}
M(r,u)\leq 2u(0)+cN(r\delta,u)
\end{equation}
for all  nonnegative subharmonic function $u\in C^{2}(\overline{B(0,1/2)})$ and all $r\in (0,1)$ with $r\delta<1/2$, where 
\[ 
M(r,u) = \sup \left\lbrace u(z) : |z|\leq r \right\rbrace =\max \left\lbrace u(z) : |z|=r \right\rbrace .
\] 
\begin{thm}
	Let $f=h+\bar{g}$ be a harmonic mapping in $\mathbb{D}$ which omits one value in $\mathbb{C}.$ Suppose that there exists $\alpha \in (0,1)$ satisfying 
	\[
	\frac{1}{l(\partial\Omega)}\int_{\partial\Omega} (|h'(\zeta)|+|g'(\zeta)|)\,|d\zeta|\leq \frac{1}{r^{\alpha}},
	\]
	for all $r\in (0,1)$ and for all $\sigma\in \mathrm{Aut}(\mathbb{D})$ with $\Omega= \sigma(D(0,r))$. Then $f$ is normal.
\end{thm}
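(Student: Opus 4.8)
The plan is to show directly that $\|f\|_{n}\le 1$, so that $f$ is normal by Proposition~\ref{B}; in fact I will aim for the stronger pointwise bound $(1-|z|^{2})(|h'(z)|+|g'(z)|)\le 1$ on all of $\mathbb{D}$, which is more than enough since $1+|f(z)|^{2}\ge 1$.

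Fix $z\in\mathbb{D}$ and let $\sigma(w)=\dfrac{w+z}{1+\bar z w}\in\mathrm{Aut}(\mathbb{D})$, so that $\sigma(0)=z$ and $\sigma'(0)=1-|z|^{2}$. I would then consider
\[
u(w):=\bigl|(h\circ\sigma)'(w)\bigr|+\bigl|(g\circ\sigma)'(w)\bigr|,\qquad w\in\mathbb{D}.
\]
Since $(h\circ\sigma)'$ and $(g\circ\sigma)'$ are holomorphic on $\mathbb{D}$, the function $u$ is nonnegative, continuous and subharmonic, so the sub-mean value inequality gives $u(0)\le\frac{1}{2\pi}\int_{0}^{2\pi}u(re^{i\theta})\,d\theta$ for every $r\in(0,1)$. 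The key observation is that $u(0)=|h'(z)||\sigma'(0)|+|g'(z)||\sigma'(0)|=(1-|z|^{2})(|h'(z)|+|g'(z)|)$ is precisely the quantity to be estimated.

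Next I would feed the hypothesis into the circular mean of $u$. Taking $\Omega=\sigma(D(0,r))$, whose boundary is parametrized by $\theta\mapsto\sigma(re^{i\theta})$, the change of variables $\zeta=\sigma(re^{i\theta})$ gives
\[
\int_{\partial\Omega}\bigl(|h'(\zeta)|+|g'(\zeta)|\bigr)\,|d\zeta|=r\int_{0}^{2\pi}u(re^{i\theta})\,d\theta,\qquad l(\partial\Omega)=r\int_{0}^{2\pi}|\sigma'(re^{i\theta})|\,d\theta,
\]
so the hypothesis becomes $\int_{0}^{2\pi}u(re^{i\theta})\,d\theta\le r^{-\alpha}\int_{0}^{2\pi}|\sigma'(re^{i\theta})|\,d\theta$. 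Using $\sigma'(w)=\dfrac{1-|z|^{2}}{(1+\bar z w)^{2}}$ together with the elementary identity $\int_{0}^{2\pi}\frac{d\theta}{|1+\bar z re^{i\theta}|^{2}}=\frac{2\pi}{1-|z|^{2}r^{2}}$, one finds $\int_{0}^{2\pi}|\sigma'(re^{i\theta})|\,d\theta=\frac{2\pi(1-|z|^{2})}{1-|z|^{2}r^{2}}\le 2\pi$, the inequality holding because $r<1$. Combining, $\frac{1}{2\pi}\int_{0}^{2\pi}u(re^{i\theta})\,d\theta\le r^{-\alpha}$, hence $u(0)\le r^{-\alpha}$ for every $r\in(0,1)$; letting $r\to1^{-}$ yields $u(0)\le 1$. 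Since $z\in\mathbb{D}$ is arbitrary, this gives $\|f\|_{n}\le 1$, and Proposition~\ref{B} finishes the proof.

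I expect the only delicate step to be the bookkeeping in the last paragraph: turning the geometric hypothesis — an average relative to the Euclidean length $l(\partial\Omega)$ over the arbitrary disk $\Omega=\sigma(D(0,r))$ — into a clean bound for the normalized circular mean of the subharmonic function $u$; everything rests on the uniform estimate $\int_{0}^{2\pi}|\sigma'(re^{i\theta})|\,d\theta\le 2\pi$. Should $l$ be intended as hyperbolic or spherical arc length, the same scheme survives after substituting the corresponding elementary bound (using $1+|f|^{2}\ge 1$ in the spherical case, or optimizing $r^{-\alpha}/(1-r^{2})$ over $r\in(0,1)$ in the hyperbolic one), still producing $\|f\|_{n}<\infty$. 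I would also remark that, along this route, the hypothesis that $f$ omits a value is not actually used; as an alternative one can instead bound $u$ on $B(0,1/2)$ from the quoted subharmonic estimate $M(r,u)\le 2u(0)+cN(r\delta,u)$ together with $N(r,u)=m_{u}(r)-u(0)$ and then pass to the rest of $\mathbb{D}$ by the automorphism $\sigma$, but the sub-mean value inequality already handles all of $\mathbb{D}$ in one stroke.
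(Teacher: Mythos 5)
Your proof is correct, and it takes a genuinely different and considerably more elementary route than the paper's. The computations check out: with $\sigma(w)=(w+z)/(1+\bar z w)$ the function $u=|(h\circ\sigma)'|+|(g\circ\sigma)'|$ is subharmonic, the change of variables $\zeta=\sigma(re^{i\theta})$ does convert the hypothesis into $\int_0^{2\pi}u(re^{i\theta})\,d\theta\le r^{-\alpha}\int_0^{2\pi}|\sigma'(re^{i\theta})|\,d\theta$, and the identity $\int_0^{2\pi}|1+\bar z re^{i\theta}|^{-2}\,d\theta=2\pi/(1-|z|^2r^2)$ gives $\int_0^{2\pi}|\sigma'(re^{i\theta})|\,d\theta\le 2\pi$; letting $r\to1^-$ then yields the pointwise bound $(1-|z|^2)\left(|h'(z)|+|g'(z)|\right)\le 1$. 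The paper instead argues by contradiction: it extracts sequences violating normality, renormalizes by automorphisms, writes the Riesz mass of the subharmonic function $|f_n|$ as a boundary integral of $|h_n'|+|g_n'|$ via Green's theorem, and then invokes a subharmonic extension theorem of Verdera--Melnikov--Paramonov together with the Kilpel\"ainen--Zhong growth estimate $M(r,u)\le 2u(0)+cN(\delta r,u)$ to bound $M(R,|f_n|)$ and reach a contradiction. Your argument buys a strictly stronger conclusion --- the hypothesis already forces $f$ to be harmonic Bloch with $\sup_z(1-|z|^2)(|h'(z)|+|g'(z)|)\le 1$, hence $\|f\|_n\le 1$ --- and it makes transparent that the omitted-value assumption plays no role (it is in fact also hard to locate where that assumption enters the paper's own argument). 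The paper's heavier machinery would be the natural fallback if the hypothesis controlled only the total boundary integral of $|h'|+|g'|$ rather than its length-average, or some other quantity that does not directly dominate the circular means of the subharmonic density; under the hypothesis as literally stated, your sub-mean-value argument settles the theorem in a few lines.
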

\begin{proof}
	Arguing by contradiction, suppose there are $ \left\lbrace z_{n}\right\rbrace ,\, \left\lbrace w_{n}\right\rbrace \subset \mathbb{D} $ such that 
	\begin{equation}\label{I}
	I_{n}=\frac{\chi\left(  f\left(  z_n\right)  ,f\left(  w_n\right)
		\right)  }{\rho \left(  z_n,w_n\right)  }\geq n,
	\end{equation}
	for all $n\in \mathbb{N}.$
	
	Now, given $n,$ we define
	\[\sigma_{n}(z)=\frac{z_n-z}{1-\overline{z_{n}}z}\,,\qquad \sigma_{n}(p_{n})=w_{n},\qquad \text{and}\qquad f_{n}=f\circ\sigma_{n}.\]
	Thus,
	\[f_{n}(0)=f(z_{n}),\qquad f_{n}(p_{n})=f(w_{n}),\qquad\text{and}\qquad \rho(0,p_{n})=\rho(z_{n},w_{n}).\]
	From this and (\ref{I}), we get $\rho(0,p_{n})\to 0$ and therefore $p_{n}\to 0.$ Hence, there is no loss of generality if we assume that $|p_{n}|<1/4$ for all $n\in \mathbb{N}$. Note that 
	\begin{equation*}
	I_{n}=\frac{\chi\left(  f_{n}\left( 0\right)  ,f_{n}\left( p_{n}\right)
		\right)  }{\rho\left(  0,p_{n}\right)  }\lesssim \frac{|f_{n}(0)-f_{n}(p_{n})|}{|p_{n}|}\frac{1}{\sqrt{1+|f_{n}(0)|}}.
	\end{equation*} 
	On the other hand, if $\varphi:\mathbb{D}\to \mathbb{R}$ is harmonic, given $0<\rho<1,$ 
	\begin{equation*}
	\varphi_{i}(z)=\frac{1}{2\pi}\int_{0}^{2\pi}P_{i}(z,\theta)\varphi(\rho e^{i\theta})d\theta, \qquad i=1,2
	\end{equation*}
	for all $|z|<\rho,$ which gives
	\begin{equation*}
	|\varphi_{i}(z)|\leq\frac{1}{2\pi}\int_{0}^{2\pi}C_{\rho}M(\rho,\varphi)d\theta,
	\end{equation*}
	for all $|z|<\rho/2,$ where
	\[M(\rho,\varphi)=\max \left\lbrace|\varphi(w)|:|w|=\rho\right\rbrace\qquad\text{and}\qquad |P_{i}(w)|\leq C_{\rho},\]
	$i\in \{1,2\}$ and $|w|<\rho/2.$
	From here and (\ref{I}), for  $\rho\in \left( 0,\frac{1}{2}\right] $
	\begin{equation}\label{J}
	I_{n}\lesssim \frac{C_{\rho}M(\rho,f_n)}{\sqrt{1+|f_{n}(0)|^{2}}}, 
	\end{equation}
	for all $n\in \mathbb{N}$ and $|p_{n}|\leq\rho/2$.
	
	On the other hand, as $f_{n}\vert_{\overline{B(0,\frac{1}{2})}}$ has a subharmonic extension to $\mathbb{R}^{2}$, in virtue of \eqref{aux subharmonic}, there are $c>0$ and $\delta>0$, which do not depend on $f_{n}$, such that  
	\begin{equation}\label{K}
	M(r,|f_{n}|)\leq2|f_{n}(0)|+cN(\delta r,|f_{n}|), 
	\end{equation} 
	for all $r>0$. Let $0<R<\frac{1}{2}$ be such that $\delta R<\frac{1}{2}$. Since  $f_{n}\in C^{2}\left(\overline{B(0,\frac{1}{2})}\right),$ it follows from Green's Theorem that for all $0<t<\delta r,$
	\[\mu (B(0,t))= \int_{B(0,t)} \Delta |f_{n}|(z)dz=\int_{\partial B(0,t)} \left \langle \nabla |f_{n}(\zeta)|, \eta(\zeta)\right\rangle |d\zeta|,\]
	where $\eta$ is the unit normal vector to $\partial B(0,t).$ Hence, if
	\[f_n=h_n+\overline{g_n},\qquad \lambda_n=|h'_n|+|g'_n|,\qquad\text{and}\qquad \lambda=|h'|+|g'|,\]
	we deduce that
	\begin{align*}
	\mu (B(0,t))&\leq \int_{\partial B(0,t)} \lambda_n(\zeta)|d\zeta|\\
	&=\int_{\partial B(0,t)} \lambda(\sigma_{n}(\zeta))|\sigma_{n} ' (\zeta)||d\zeta|\\
	&= \int_{\partial \sigma_{n}(B(0,t))} \lambda(\zeta)|d\zeta|,
	\end{align*}
	for $0<t\leq \delta r<R$. From here and our hypothesis, for all $r\in(0,1)$ with $r\delta <R,$ we have
	\begin{equation*}
	N(\delta r,|f_{n}|)\leq \int_{0}^{\delta r}\frac{2\pi t_{n}}{t}\,\frac{1}{t^{\alpha}}\,dt \lesssim (\delta r)^{1-\alpha},
	\end{equation*}
	where $t_{n}$ is the Euclidean radius of  $\partial \sigma_{n}(B(0,t))$. We obtain, by (\ref {J}) and (\ref {K}), that for all $n$ such that $|p_{n}|<R/2$, 
	\begin{equation*}
	I_{n}\lesssim \frac{C_{R}\, M(R,|f_{n}|)}{\sqrt{1+|f_{n}(0)|^{2}}}\lesssim C_{R}\left[ \frac{|f_{n}(0)|}{\sqrt{1+|f_{n}(0)|^{2}}}+N(\delta r,|f_{n}|)\right] \leq C_{R}\left( 1+(\delta r)^{1-\alpha}\right), 
	\end{equation*}
	which contradicts the assumption in  (\ref{I}).
\end{proof}

\end{document}